\newcommand{\rad}[1]{\text{rad}(#1)}
\newtheorem{theorem}{Theorem}
\newtheorem{lemma}{Lemma}
\newtheorem{conjecture}{Conjecture}
\newtheorem*{korselt}{Korselt's Criterion}
\title{RADICALLY WEAKENING THE LEHMER AND CARMICHAEL CONDITIONS}
\author{NATHAN MCNEW}
\address{Department of Mathematics, Dartmouth College \\ Hanover, NH 03755 USA}
\email{nathan.g.mcnew@dartmouth.edu}
\date{}
\begin{document}
\begin{abstract}
Lehmer's totient problem asks if there exist composite integers $n$ satisfying the condition $\varphi(n)|(n-1)$, (where $\varphi$ is the Euler-phi function) while Carmichael numbers satisfy the weaker condition $\lambda(n)|(n-1)$ (where $\lambda$ is the Carmichael universal exponent function). We weaken the condition further, looking at those composite $n$ where each prime divisor of $\varphi(n)$ also divides $n-1$. (So $\rad{\varphi(n)}|(n-1)$.) While these numbers appear to be far more numerous than the Carmichael
numbers, we show that their distribution has the same rough upper bound as that of the Carmichael numbers, a bound which is heuristically tight.
\end{abstract}
  
\maketitle
  
%\begin{history}
%\received{(Day Month Year)}
%\accepted{(Day Month Year)}
%\comby{xxx}
%\end{history}
  
\section{Introduction}
Let $\varphi(n)$ denote the Euler totient function of $n$. Lehmer \cite{Lehmer} asked whether there exist composite positive integers $n$ such that $\varphi(n)|n-1$.  Integers which satisfy this \lq\lq Lehmer Condition" are sometimes referred to as Lehmer numbers, however no examples are known.  Cohen and Hagis \cite{Cohen} have shown that any Lehmer numbers would necessarily have at least 14 prime factors, and computations by Pinch \cite{Pinch_Lehmer} show that any examples must be greater than $10^{30}$.  Further, Luca and Pomerance \cite{Luca} have shown that if $\mathcal{L}(x)$ is the number of Lehmer numbers up to $x$ then, as $x \to \infty$,
\[\mathcal{L}(x) \leq \frac{x^{1/2}}{(\log x)^{1/2+o(1)}}.\]
    
Carmichael numbers are the composite integers $n$ which satisfy the congruence $a^n \equiv a \pmod{n}$ for every integer $a$.  (Fermat's little theorem guarantees that any prime number $n$ satisfies this congruence.)   Carmichael numbers were first characterized by Korselt \cite{Korselt} in 1899:
  
\begin{korselt}A composite number $n$ is a Carmichael number if and only if $n$ is square-free, and for each prime $p$ which divides $n$, $p - 1$ divides $n - 1$.
\end{korselt}
  
Korselt did not find any Carmichael numbers, however.  The smallest, 561, was found by Carmichael in 1910 \cite{Carmichael}.  Carmichael also gave a new characterization of these numbers as those composite $n$ which satisfy $\lambda(n)|n-1$, where $\lambda(n)$, the Carmichael lambda function, denotes the size of the largest cyclic subgroup of $(\mathbb{Z}/n\mathbb{Z})^\times$.
  
Since $\lambda(n)|\varphi(n)$ for every integer $n$, the Carmichael property can be viewed as a weakening of the Lehmer property.  Every Lehmer number would also be a Carmichael number.  In contrast to the Lehmer numbers, it is known, due to Alford, Granville and Pomerance \cite{Infinite}, that there are infinitely many Carmichael numbers.  Pomerance \cite{Pomerance} also proves an upper bound for the number $C(x)$ of Carmichael numbers up to $x$, namely as $x \to \infty$,
\begin{equation}C(x) \leq x^{1-\{1+o(1)\}\log\log\log x /\log\log x}, \label{car}
\end{equation}
and presents a heuristic argument that this is the true size of $C(x)$.
  
Grau and Oller-Marc{\'e}n \cite{Grau} present other possible weakenings of the Lehmer property: looking at the sets of those $n$ such that $\varphi(n)|(n-1)^k$ for a fixed value of $k$ as well as the set of those $n$ for which $\varphi(n)|(n-1)^k$ for some $k$, that is all of the primes dividing $\varphi(n)$ also divide $n-1$.  Note that this last set is a weakening of both the Lehmer and Carmichael properties, since $\lambda(n)$ and $\varphi(n)$ have the same prime divisors.  Our results resolve several conjectures that Grau and Oller-Marc{\'e}n made in their paper.  
  
We focus primarily on this final set.  Let $\kappa(n) = \rad{\varphi(n)}$ denote the product of the primes which divide the value $\varphi(n)$. (Note that $\kappa(n) = \rad{\varphi(n)} = \rad{\lambda(n)}$.)  Let $\mathbb{K}(x) $ be the set of composite numbers $n \leq x$ which satisfy $\kappa(n)| n-1$, and let $K(x) = |\mathbb{K}(x)|$. (Observe that every prime number $p$ trivially satisfies $\kappa(p)|p-1$.)
  
We prove that the upper bound \eqref{car} for $C(x)$ also applies for $K(x)$.  We also present upper bounds for the number of $n \in \mathbb{K}(x)$ which are the product of a fixed number of primes, as well as several related conjectures and computations.
  
\section{The Upper Bound}
The condition for $n$ to be a member of $\mathbb{K}(x)$ is substantially weaker than that required for $n$ to be a Carmichael number, and computations (see Section \ref{sec:computations}) show that $K(x)$ appears to be substantially greater than $C(x)$.  It is therefore somewhat surprising to find that these two functions have the same rough upper bound.  Our proof of this fact is similar to the one for $C(x)$ in \cite{Pomerance}.
\begin{theorem} Define $L(x) = \exp(\log x \frac{\log\log\log x}{\log\log x})$. Then as $x \rightarrow \infty$, \label{Main}
\[K(x) \leq \frac{x}{L(x)^{1 + o(1)}}.\]
\begin{proof} We consider first those integers $n \leq x$ which have a large prime divisor.  Specifically, let $P(n)$ denote the largest prime divisor of $n$, and write $n = mp$ where $p = P(n)$.  We restrict our attention to those $n$ with $P(n) > L(x)^2$, and let $K'(x) = \#\{n \in \mathbb{K}(x) \mid P(n) > L(x)^2\}$.
  
If $n=mp$ is to satisfy $\kappa(n) | n-1$, then we must have $m \leq \frac{x}{p} $, and $m$ must be congruent to 1 $\pmod{ \rad{p-1}}$. Thus, for any fixed $p$ there are at most $1 + \lfloor\frac{x}{p\cdot\rad{p-1}}\rfloor$ possibilities for $m$.  Requiring $n$ to be composite (thus $m \neq 1$) leaves us with at most $\frac{x}{p\cdot\rad{p-1}}$ possibilities.
  
Thus we see that 
\begin{align} K'(x) &= \sum_{\substack{n = mp\leq x \\ p > L(x)^2 \\ \kappa(n)|n-1} } 1 \leq \sum_{\substack{p > L(x)^2} } \frac{x}{p \hspace{1mm} \rad{p-1} } \notag \\
&\leq \sum_{\substack{p > L(x)^2} } \frac{x}{(p-1) \hspace{1mm} \rad{p-1}}. \label{squarefree}
\end{align}
  
Now, we observe that for each prime $p$, the denominator in \eqref{squarefree} is a squarefull number, and that any squarefull number can be represented uniquely as $d \hspace{1mm} \rad{d}$ for some integer $d$.  We can therefore replace this sum with a sum over all squarefull numbers:
  
\begin{align*} 
\sum_{\substack{p > L(x)^2} } \frac{x}{(p-1) \hspace{1mm} \rad{p-1} } & \leq \sum_{\substack{d > L(x)^2 \\ d \text{ squareful}} } \frac{x}{d} .
\end{align*}
Using partial summation and the fact that 
\[\sum_{\substack{n \leq x \\ n \text{ squareful}} } 1 = \frac{\zeta(3/2)}{\zeta(3)} x^{1/2} + O(x^{1/3}),\]
we see that 
\[K'(x) \leq  \sum_{\substack{d > L(x)^2 \\ d \text{ squareful}} } \frac{x}{d}  \ll \frac{x}{L(x)}.\]
  
We may assume that $n>\frac{x}{L(x)}$, so to prove the theorem, it suffices to count those $n$ with $\frac{x}{L(x)} < n \leq x$ and $P(n) \leq L(x)^2$.  We denote this count by $K''(x)$. Observe that every such $n$ has a divisor $d$ satisfying \begin{equation}\label{div} \frac{x}{L(X)^3} < d \leq \frac{x}{L(x)}.\end{equation}  
Write $n =md$, so $m\leq \frac{x}{d}$. Now, if $n = md$ is to satisfy $\kappa(md)|md-1$, we have $m \equiv 1 \pmod{\kappa(d)}$, and since $(n,\kappa(n)) = 1$ and $\kappa(d)|\kappa(n)$ we know $(d,\kappa(d))=1$. Thus the Chinese remainder theorem implies that there are at most $1 + \lfloor\frac{x}{d\kappa(d)}\rfloor$ possibilities for $m$. Thus 
\[K''(x) \leq \sideset{}{'}\sum \left( 1 + \frac{x}{d\kappa(d)} \right) \leq \frac{x}{L(x)} +  \sideset{}{'}\sum \left\lfloor \frac{x}{d\kappa(d)} \right\rfloor ,\]
where $\sideset{}{'}\sum$ denotes a sum over $d$ satisfying \eqref{div}.  If $d\kappa(d) \leq x$ and $d$ satisfies \eqref{div}, then $\kappa(d) < L(x)^3$, so that
\begin{align}K''(x) &\leq \frac{x}{L(x)} +  \sideset{}{'}\sum \left\lfloor \frac{x}{d\kappa(d)} \right\rfloor \notag \\
&\leq \frac{x}{L(x)} + x \sum_{c \leq L(x)^3} \frac{1}{c} \sideset{}{'}\sum_{\kappa(d) = c}\frac{1}{d}. \label{doublesum}
\end{align}
  
We treat the inner sum in \eqref{doublesum} by partial summation: 
\begin{align} \sideset{}{'}\sum_{\kappa(d) = c}\frac{1}{d} = \frac{L(x)}{x} \sideset{}{'}\sum_{\kappa(d) = c} 1 + \int_{\frac{x}{L(x)^3}}^{\frac{x}{L(x)}}\hspace{2mm}\frac{1}{t^2} \hspace{2mm} \sideset{}{'}\sum_{\substack{\kappa(d) = c \\ d< t}} 1\hspace{2mm} dt. \label{parsum}
\end{align}
We are thus interested in obtaining an upper bound for $\mathcal{K}(t,c)$, the number of $d \leq t$ with $\kappa(d) = c$.
  
\begin{lemma} \label{uniform} As $t \rightarrow \infty$, $\mathcal{K}(t,c) \leq \frac{t}{L(t)^{1+o(1)}}$ uniformly for all $c$.  
\end{lemma}
Before proving the lemma, we see that using this upper bound in \eqref{parsum} gives us 
\begin{align*} \sideset{}{'}\sum_{\kappa(d) = c}\frac{1}{d} &\leq \frac{L(x)}{x} \mathcal{K}(\tfrac{x}{L(x)},c) + \int_{\frac{x}{L(x)^3}}^{\frac{x}{L(x)}}\hspace{2mm}\frac{1}{t^2} \hspace{2mm} \mathcal{K}(t,c)\hspace{2mm}  dt \\
&\leq {L(\tfrac{x}{L(x)})^{-1+o(1)}} + \int_{\frac{x}{L(x)^3}}^{\frac{x}{L(x)}}\hspace{2mm}\frac{1}{tL(t)^{1+o(1)}} \hspace{2mm} dt \\
&= L(x)^{-1+o(1)}
\end{align*}
as $x \to \infty$. This can be used in \eqref{doublesum} to see that $K''(x) \leq  \frac{x}{L(x)^{1+o(1)}}$.  The theorem then follows immediately from our estimates of $K'(x)$ and $K''(x)$.
  
It thus remains to prove Lemma \ref{uniform}.  We may assume that $c \leq t$, otherwise $\mathcal{K}(t,c) = 0$.  Then, for any $r > 0$ we can write:
\begin{align*}
\mathcal{K}(t,c) &= \sum_{\substack{d \leq t\\ \kappa(d) = c}} 1 \leq t^r \sum_{\substack{\kappa(d) = c}} d^{-r}\\
& \leq t^r \sum_{\substack{p|d \Rightarrow \rad{p-1}|c}} d^{-r} = t^r \prod_{\substack{\rad{p-1}|c}}\tfrac{1}{1-p^{-r}}.
\end{align*}
Assuming $r \geq 1/2 + \epsilon$ then 
\begin{align*} \prod_{\substack{\rad{p-1}|c}}\tfrac{1}{1-p^{-r}} &= \exp\left(\sum_{\substack{\rad{p-1}|c}}-\log({1-p^{-r})}\right) = \exp\left(\sum_{\substack{\rad{p-1}|c}}\hspace{3mm}\sum_{n=1}^\infty \frac{p^{-nr}}{n}\right) \\
&= \exp\left(\left(\sum_{\substack{\rad{p-1}|c}}p^{-r}\right) + 
O_\epsilon(1)\right).
\end{align*}
So we have
\begin{align*} \mathcal{K}(t,c) &\ll_{\epsilon} t^r \exp \left(\sum_{\text{rad}(p-1) | c} p^{-r} \right) \leq t^r \exp \left(\sum_{\text{rad}(l) | c} l^{-r} \right) \\
& = t^r \exp \left(\prod_{p|c} (1-p^{-r})^{-1} \right) \leq t^r \exp\exp \left(\sum_{p|c}p^{-r} + O_\epsilon(1) \right)
 \end{align*}
by applying this trick a second time.
Now, $\sum_{p|c}p^{-r}$ is maximized when $c$ is the largest primorial up to $t$, in other words $c = p_1p_2 \cdots p_k < t$, where $p_i$ is the $i$th prime.  Further, if $t$ is sufficiently large, then the prime number theorem implies that $p_k \leq 2\log(t)$ and thus 
\[ \sum_{p|c}p^{-r} \leq \sum_{p<2\log(t)}p^{-r} \]
Choose $r = 1-(\log\log\log t)/(\log\log t)$.  Thus for large $t$, we may choose $\epsilon = 1/4$. Then we have $t^r = \frac{t}{L(t)}$ and \[\sum_{p<2\log(t)}p^{-r}  = O(\log\log t/\log\log\log t).\] Thus
\begin{align*}\mathcal{K}(t,c) &\leq t^r \exp\exp \left(\sum_{p|c}p^{-r} + O_\epsilon(1) \right) \\
&= \frac{t}{L(t)}\exp\exp(O(\log\log t /\log\log\log t)) = \frac{t}{L(t)^{1 + o(1)}},
\end{align*}
as $t \to \infty$, which completes the proof of the lemma.
\end{proof}
\end{theorem}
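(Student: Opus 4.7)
My plan is to apply a Rankin-style upper bound. First I would observe that we may assume $c \le t$, since otherwise $\mathcal{K}(t,c)=0$. For any $r \in (0,1)$ to be chosen later, I can write
\[
\mathcal{K}(t,c) \;\le\; t^r \sum_{\kappa(d)=c} d^{-r},
\]
and then relax the equality $\kappa(d)=c$ to the divisibility $\kappa(d)\mid c$, which is the statement that every prime $p\mid d$ satisfies $\rad{p-1}\mid c$. Because this relaxed condition is multiplicative in $d$, the right-hand side factors as an Euler product over such primes.

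Next I would take the logarithm of the Euler product. Expanding $-\log(1-p^{-r}) = p^{-r} + \sum_{n\ge 2} p^{-nr}/n$ and fixing $r \ge 1/2+\varepsilon$ for some small $\varepsilon>0$, the higher-order tail contributes only $O_\varepsilon(1)$ once summed over all primes. The problem is thereby reduced to controlling
\[
S(c,r) \;:=\; \sum_{\rad{p-1}\mid c} p^{-r}.
\]

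To estimate $S(c,r)$ uniformly in $c$, I would enlarge the summation set from primes $p$ with $\rad{p-1}\mid c$ to \emph{all} positive integers $\ell$ with $\rad{\ell}\mid c$. This enlarged sum is exactly the Euler product $\prod_{p\mid c}(1-p^{-r})^{-1}$; taking logarithms once more, under the same assumption on $r$, reduces the task to bounding $\sum_{p\mid c} p^{-r}$. Since this quantity only grows when smaller primes are included in $c$, its maximum subject to $c\le t$ is attained when $c$ equals the largest primorial $\le t$. By the prime number theorem the largest prime appearing in such a primorial is at most $(2+o(1))\log t$, so $\sum_{p\mid c} p^{-r}$ is dominated by $\sum_{p\le 2\log t} p^{-r}$ uniformly in $c$.

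Finally I would choose $r = 1-(\log\log\log t)/(\log\log t)$, so that $t^r = t/L(t)$, and a short calculation gives $\sum_{p\le 2\log t} p^{-r} = O(\log\log t/\log\log\log t)$. Since we applied the logarithmic expansion twice, this pumps up to a factor of $\exp\exp O(\log\log t/\log\log\log t) = L(t)^{o(1)}$, giving the claimed bound $\mathcal{K}(t,c)\le t/L(t)^{1+o(1)}$ uniformly in $c$. The delicate point is exactly this two-step relaxation: because $c$ is completely arbitrary, one cannot exploit any structure of the primes that actually appear as shifts $p-1$; the only available leverage is that $\rad{p-1}\mid c$ forces the admissible primes into a set whose size is controlled by $c$, and the double Rankin expansion is what makes this control both uniform in $c$ and strong enough to match the desired rate.
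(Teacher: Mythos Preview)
Your proposal is correct and follows essentially the same route as the paper: Rankin's trick with parameter $r$, relaxation of $\kappa(d)=c$ to the multiplicative condition $p\mid d\Rightarrow\rad{p-1}\mid c$, the two successive logarithmic expansions (first over primes $p$ with $\rad{p-1}\mid c$, then over primes dividing $c$), the primorial extremal argument giving $\sum_{p\mid c}p^{-r}\le\sum_{p\le 2\log t}p^{-r}$, and the same choice $r=1-(\log\log\log t)/(\log\log t)$. The steps, the order in which they are taken, and the final arithmetic all match the paper's proof of the lemma.
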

 
 \section{Bounds for integers in $\mathbb{K}(x)$ with $d$ prime factors}
 Since the integers satisfying our condition have a similar behavior to the Carmichael numbers assymptotically, it is natural to wonder if the behavior of those numbers with a fixed number of prime factors behaves similarly as well.  Granville and Pomerance \cite{Granville} conjecture that the number, $C_d(x)$, of Carmichael numbers with exactly $d$ prime factors is $x^{1/d + o(1)}$ when $d\geq 3$, and as $x \to \infty$.  This has not been proven for any $k$. However, Heath-Brown \cite{Heath} has shown that $C_3(x) \ll_\epsilon x^{7/20+\epsilon}$. Note that there are no Carmichael numbers with 2 prime factors.
  
 Let $K_d(x) = \#\{n \in \mathbb{K}(x), \omega(n)=d\}$ count the integers satisfying our condition up to $x$ with exactly $d$ prime factors.  Using the same method as the first part of Theorem \ref{Main} we can prove
 
 \begin{theorem} Uniformly for $d\geq 2$ we have the bound $K_d(x) \ll x^{1-\frac{1}{2d}}$. \label{kd}
 \end{theorem}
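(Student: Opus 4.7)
\smallskip
\noindent\textbf{Proof proposal.} The plan is to adapt the dichotomy from the first part of Theorem \ref{Main}---writing $n = mp$ with $p = P(n)$ and using $m \equiv 1 \pmod{\rad{p-1}}$, which follows from $\kappa(n)\mid n-1$---but apply it after a dyadic decomposition on $n$, so as to exploit the constraint $P(n) \geq n^{1/d}$ that is specific to $\omega(n)=d$. Fix a dyadic window $(T,2T]\subseteq(0,x]$ and count those $n\in\mathbb{K}(x)\cap(T,2T]$ with $\omega(n)=d$. For each such $n$ we have $p>T^{1/d}$ and $m=n/p\in(T/p,2T/p]$ lies in the progression $m\equiv 1\pmod{\rad{p-1}}$, so there are at most $1+T/(p\,\rad{p-1})$ admissible values of $m$ for each $p$.

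The main-term contribution comes from primes with $p\,\rad{p-1}\leq T$, for which the ``$+1$'' is absorbed into $2T/(p\,\rad{p-1})$. Summing via the squareful-number bijection from the proof of Theorem \ref{Main}, together with the bound $\sum_{N\text{ squareful}, N\geq Y}N^{-1}\ll Y^{-1/2}$, gives
\[
\sum_{\substack{p>T^{1/d}\\ p\,\rad{p-1}\leq T}}\frac{2T}{p\,\rad{p-1}} \;\ll\; T\sum_{\substack{N\text{ squareful}\\ N\geq T^{1/d}}}\frac{1}{N} \;\ll\; T^{1-1/(2d)}.
\]

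The main obstacle is that the dyadic window $(T/p,2T/p]$ does not contain $m=1$, so the ``$+1$'' cannot be eliminated as in the proof of Theorem \ref{Main} and must be controlled directly. My plan is: when $p\,\rad{p-1}>T$ the progression meets $(T/p,2T/p]$ in at most the single point $m=1+\rad{p-1}$, and requiring $m\leq 2T/p$ forces $(p-1)\rad{p-1}\leq 2T$. Any composite $n\in\mathbb{K}(x)$ is necessarily odd (otherwise $2\mid\kappa(n)$ would contradict $\kappa(n)\mid n-1$), so $m$ is odd and at least $3$, forcing $p\leq 2T/3$ and hence $(p-1)\rad{p-1}>T-p>T/3$. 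The squareful integers in $(T/3,2T]$ number $\ll T^{1/2}$, and each corresponds via the bijection to at most one such prime, so the sporadic contribution is $\ll T^{1/2}\leq T^{1-1/(2d)}$ once $d\geq 2$. Summing both contributions dyadically over $T\leq x$ then yields $K_d(x)\ll x^{1-1/(2d)}$ with an implied constant bounded uniformly in $d$.
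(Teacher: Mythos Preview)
Your argument is correct and follows the same strategy as the paper---dyadic decomposition on $n$, then the large-prime-factor argument from Theorem~\ref{Main} combined with the squareful-integer tail bound---but you have made the ``$+1$'' handling harder than necessary. The paper, having fixed the window $n\in(x/2,x]$ only to force $P(n)>(x/2)^{1/d}$, then drops the lower restriction and simply counts all composite $n\le x$ with $\kappa(n)\mid n-1$ and $P(n)>(x/2)^{1/d}$; since compositeness rules out $m=1$, the count for each $p$ is at most $x/(p\,\rad{p-1})$ outright, and the squareful sum gives $K_d(x)-K_d(x/2)\ll x^{1-1/(2d)}$ with no sporadic case to treat.

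One small inaccuracy in your write-up: the single residue in $(T/p,2T/p]$ need not be $1+\rad{p-1}$ itself, only some $1+k\,\rad{p-1}$ with $k\ge1$; fortunately your deductions use only $m\ge 1+\rad{p-1}$, which holds regardless, so the conclusion stands.
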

 \begin{proof} Consider first those $n > x/2$.  Since $n$ has $d$ prime factors, the largest prime factor must then satisfy $P(n) > (x/2)^{1/d}$.  Applying the same argument used for integers $n$ with a large prime factor in Theorem \ref{Main}, we find that the total contribution of such integers is at most $O(x^{1-\frac{1}{2d}})$. Hence, $K_d(x) -  K_d(x/2) \ll x^{1-\frac{1}{2d}}$.  
 
Now summing dyadically we have 
 \[K_d(x) = \sum_{i=0}^\infty K_d(2^{-i}x)-K_d(2^{-i-1}x) \ll \sum_{i \geq 0} \left(\frac{x}{2^i}\right)^{1-\frac{1}{2d}} \ll x^{1-\frac{1}{2d}}. \]
 \end{proof}
 
In contrast to the situation for Carmichael numbers, there do exist numbers satisfying our condition with two prime factors, and we can prove a substantially better bound than that of Theorem \ref{kd} in this case.  As a matter of fact, their behavior appears to be like that conjectured for Carmichael numbers with a given number of prime factors.
 
 \begin{theorem} The numbers in $\mathbb{K}(x)$ with exactly two prime factors satisfy the bound $K_2(x) \leq x^{1/2}\exp\left(\frac{2(2\log x)^{1/2}}{\log\log x}\left(1+O\left(\tfrac{1}{\log\log x}\right)\right)\right)$.
\end{theorem}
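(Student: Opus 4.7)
The plan is to begin by decoding the divisibility condition $\kappa(n)\mid n-1$ when $n = pq$ is a product of two distinct primes. Expanding $\varphi(pq) = (p-1)(q-1)$, for any prime $\ell\mid p-1$ the congruence $p \equiv 1 \pmod{\ell}$ yields $pq - 1 \equiv q - 1 \pmod{\ell}$, so $\ell \mid pq - 1$ forces $\ell \mid q - 1$; the symmetric implication holds with the roles of $p$ and $q$ interchanged. Hence $\kappa(pq) \mid pq - 1$ is equivalent to $\rad{p-1} = \rad{q-1}$. Denote this common squarefree value by $r$ and write $p = 1 + ru$, $q = 1 + rv$ with $u,v\geq 1$ integers satisfying $\rad{u}\mid r$ and $\rad{v}\mid r$. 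Since we may assume $p < q$, we have $p \leq \sqrt{x}$, hence $r \leq p - 1 < \sqrt{x}$; moreover $pq \leq x$ forces $r^{2}uv \leq x$, i.e.\ $uv \leq x/r^{2}$.

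For each squarefree $r < \sqrt{x}$ I count the ordered pairs $(u,v)$ by grouping them according to the product $m = uv$: because every divisor of an integer $m$ with $\rad{m}\mid r$ itself has its radical dividing $r$, the number of pairs is bounded by $\sum_{m\leq x/r^{2},\ \rad{m}\mid r} d(m)$. Applying Rankin's trick at $\sigma = \tfrac{1}{2}$, together with the Euler-product identity $\sum_{k\geq 0}(k+1)p^{-k/2} = (1-p^{-1/2})^{-2}$, gives
\[\sum_{m\leq x/r^{2},\ \rad{m}\mid r} d(m) \leq \left(\frac{x}{r^{2}}\right)^{1/2} \prod_{p\mid r}(1-p^{-1/2})^{-2}.\]
Summing over squarefree $r\leq\sqrt{x}$ and halving to pass to unordered pairs produces
\[K_{2}(x) \leq \tfrac{1}{2}\,x^{1/2}\sum_{\substack{r\leq\sqrt{x}\\ r\text{ sqfree}}} \frac{1}{r}\prod_{p\mid r}(1-p^{-1/2})^{-2}.\]

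The principal obstacle, and the final step, is a uniform upper bound for this Euler-product factor over squarefree $r\leq\sqrt{x}$. The product is maximized when $r$ is the largest primorial not exceeding $\sqrt{x}$, namely $r = p_{1}p_{2}\cdots p_{k}$ with $p_{k}\sim\tfrac{1}{2}\log x$ by the prime number theorem. Expanding $-\log(1-t) = t + O(t^{2})$ and using the Mertens-type estimate $\sum_{p\leq y} p^{-1/2}\sim 2y^{1/2}/\log y$, a careful computation gives
\[\prod_{p\mid r}(1-p^{-1/2})^{-2} \leq \exp\!\left(\frac{2(2\log x)^{1/2}}{\log\log x}\bigl(1 + O(1/\log\log x)\bigr)\right)\]
uniformly in $r$. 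The surviving outer factor $\sum_{r\leq\sqrt{x}} 1/r = O(\log x)$ is then absorbed into this exponential, since $\log\log x$ is of smaller order than $(\log x)^{1/2}/\log\log x$. Tracking constants through this extremal calculation to obtain exactly the coefficient $2\sqrt{2}$ advertised in the statement is the delicate point.
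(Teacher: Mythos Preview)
Your argument is correct and follows essentially the same route as the paper: both reduce the condition $\kappa(pq)\mid pq-1$ to $\rad{p-1}=\rad{q-1}$, drop primality, apply Rankin's trick at exponent $\tfrac12$, bound the resulting Euler product by taking the extremal primorial, and absorb a residual $O(\log x)$ factor into the exponential. The only cosmetic difference is the bookkeeping: you parametrize by the common radical $r$ and count pairs $(u,v)$ with $uv\le x/r^{2}$ via $\sum d(m)$, whereas the paper parametrizes by $m=p-1$ and replaces $m\,\rad{m}$ by a squarefull integer; both routes produce the same Euler product $\prod_{p\text{ small}}(1-p^{-1/2})^{-2}$ and hence the same exponent $2(2\log x)^{1/2}/\log\log x$.
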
 
\begin{proof}Write $n = pq \leq x$.  Since $\kappa(pq) = \rad{(p-1)(q-1)}$ and $pq-1 = (p-1)(q-1)+(p-1)+(q-1)$ we have that $\kappa(pq)|pq-1$ if and only if $\rad{p-1}=\rad{q-1}$. Thus
\begin{align*}
K_2(x) &= \sum_{\substack{pq\leq x\\ \kappa(pq)|pq-1}} 1
 \hspace{2mm}= \sum_{\substack{pq\leq x\\ \rad{p-1}=\rad{q-1}}} \hspace{-5mm} 1 \hspace{4mm} \leq \sum_{\substack{(m+1)(n+1) \leq x\\ \rad{m}=\rad{n}}} 1 \\ &\leq \sum_{\substack{mn \leq x\\ \rad{m}=\rad{n}}} 1 \hspace{2mm} \leq x^r \sum_{\substack{mn\leq x\\ \rad{m}=\rad{n}}} \frac{1}{(mn)^r} 
\end{align*}
for any $r \geq 0$.  We can rewrite this as a double sum: 
\begin{align*}
 x^r \hspace{-7mm} \sum_{\substack{mn\leq x\\ \rad{m}=\rad{n}}} \frac{1}{(mn)^r} &=x^r \sum_{m\leq x} \frac{1}{m^r}\sum_{\substack{n\leq x/m\\ p|m \text{ iff } p|n}} \frac{1}{n^r} \leq x^r \sum_{m\leq x} \frac{1}{m^r}\prod_{\substack{p|m}} \frac{\frac{1}{p^r}}{1-\frac{1}{p^r}} \\
 &= x^r \sum_{m\leq x} \frac{1}{m^r\rad{m}^r}\prod_{\substack{p|m}} \frac{1}{1-p^{-r}}\\
 &= x^r \sum_{m\leq x} \frac{1}{m^r\rad{m}^r}\exp\left(\sum_{\substack{p|m}}-\log\left(1-p^{-r}\right)\right)\\
 & = x^r\sum_{m\leq x} \frac{1}{m^r\rad{m}^r}\exp\left(\sum_{\substack{p|m}} \sum_{j=1}^\infty \frac{p^{-jr}}{j}\right).
\end{align*}
 As in the proof of Lemma \ref{uniform}, we can replace the condition $p|m$ above with $p\leq 2 \log x$, and $m\hspace{1mm}\rad{m}$ by a squareful integer $d$.  We also set $r = 1/2$. Thus:
 \begin{align*}
 x^{1/2}\sum_{m\leq x} &\frac{1}{m^{1/2}\rad{m}^{1/2}}\exp\left(\sum_{\substack{p|m}} \sum_{j=1}^\infty \frac{p^{-j/2}}{j}\right) 
 \\& \leq  x^{1/2}\exp\left(\sum_{\substack{p\leq 2\log x}}\left( p^{-1/2} + \frac{1}{2p} + \sum_{j=3}^\infty \frac{p^{-j/2}}{j}\right)\right)\sum_{\substack{d\leq x^2 \\ d \text{ squarefull}}} \frac{1}{d^{1/2}}.
\end{align*}
By the prime number theorem we have \[\sum_{\substack{p\leq 2\log x}} p^{-1/2} = \text{li}\left(\left(2\log x\right)^{1/2}\right)\left(1+O\left(\frac{1}{\log\log x}\right)\right).\]
So we can rewrite the expression above as
 \begin{align*}
 x^{1/2}\exp & \left( \text{li} \left((2\log x)^{1/2}\right)\left(1+O\left(\tfrac{1}{\log\log x}\right)\right  ) + \tfrac{1}{2}\log\log\log x  + O(1) \right) \hspace{-4mm}\sum_{\substack{d\leq x^2 \\ d \text{ squarefull}}}\hspace{-4mm}\frac{1}{d^{1/2}} \\
 &=  x^{1/2}\exp\left(\frac{2(2\log x)^{1/2}}{\log\log x}\left(1+O\left(\tfrac{1}{\log\log x}\right)\right)\right) \hspace{-4mm}\sum_{\substack{d\leq x^2 \\ d \text{ squarefull}}}\hspace{-4mm}\frac{1}{d^{1/2}}.
\end{align*}

By partial summation, we see that \[\sum_{\substack{d\leq x^2 \\ d \text{ squarefull}}}\hspace{-4mm}\frac{1}{d^{1/2}} = O(\log x),\]
 which can be absorbed into the existing error term in our equation, proving the theorem.
 \end{proof}
 
Note that if we assume a strong form of the prime $k$-tuples conjecture, due to Hardy and
 Littlewood, we can show that this is fairly close to the actual size of $K_2(x)$.  Their conjecture implies that the number of integers $m$ up to $x^{1/2}$ with both $m+1$ and
 $2m+1$ prime is asymptotically $cx^{1/2}/(\log x)^{2}$.  Now, whenever both are prime, (and $m \neq 1$) we see that
$\kappa((m+1)(2m+1)) = \rad{2m^2} = \rad{m}$, (since $m$ is necessarily even) and ${\rad{m}|(m+1)(2m+1)-1}$. Thus $K_2(x)$ would be at least of order $x^{1/2}/(\log x)^{2}$.
 
 \section{$k$-Lehmer Numbers}
 Grau and Oller-Marc{\'e}n \cite{Grau} define a $k$-Lehmer number to be an integer $n$ satisfying the condition $\varphi(n)|(n-1)^k$.  (Note that they do not require $n$ to be composite, as we have in our definitions.)  In their paper they make several conjectures about the counts of these $k$-Lehmer numbers.  Our Theorem \ref{Main}, which shows in particular that $K(x) = O(\pi(x))$ (where $\pi(x)$ is the prime counting function) resolves four of these conjectures, Conjectures 8 (i)-(iv).  Namely, this result proves Conjectures 8 (i),(ii) and (iv), while disproving (iii).  Our methods, combined with the methods used in \cite{Lehmer23} to obtain a bound on the Lehmer numbers, can also be used to bound the counts of the $k$-Lehmer numbers.
 
We let $\mathbb{L}_k(x)$ be the set of composite $n$ up to $x$ which satisfy ${\varphi(n)|(n-1)^k}$, and $L_k(x) = |\mathbb{L}_k(x)|$.  (So Grau and Oller-Marc{\'e}n's function $C_k(x)=L_k(x)+\pi(x) +1$.)
  
\begin{theorem} For $k \geq 2$ we have $L_k(x)\ll_k x^{1- \frac{1}{4k-1}}$.
\end{theorem}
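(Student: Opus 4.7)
The plan is to adapt the strategy of Theorem~\ref{Main}, splitting according to the largest prime factor $P(n)$, but using the $k$-th root function
\[f_k(t) := \prod_{q^a \| t} q^{\lceil a/k \rceil},\]
defined so that $t \mid s^k$ is equivalent to $f_k(t) \mid s$. First note that $n$ must be squarefree: if $p^2 \mid n$ then $p \mid \varphi(n) \mid (n-1)^k$, forcing $p \mid n-1$, which contradicts $p \mid n$. Writing $n = mp$ with $p = P(n)$ and $m \geq 2$, the identity $mp \equiv m \pmod{p-1}$ converts $(p-1) \mid (n-1)^k$ into $(p-1) \mid (m-1)^k$, equivalently $f_k(p-1) \mid m-1$. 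I fix the threshold $y = x^{2/(4k-1)}$.

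For $P(n) > y$, the congruence permits at most $x/(p\, f_k(p-1))$ choices of $m$ for each $p$. The key observation, analogous to the squarefull trick used for $\rad{p-1}$ in Theorem~\ref{Main}, is that $(p-1)\, f_k(p-1)$ is squarefull: every prime $q$ with $v_q(p-1) = a \geq 1$ contributes exponent $a + \lceil a/k \rceil \geq 2$, and the map $n \mapsto n\, f_k(n)$ is injective (it is induced by the strictly increasing map $a \mapsto a + \lceil a/k\rceil$). Bounding by the tail of $\sum 1/D$ over squarefull $D > y$, which is $O(y^{-1/2})$ via partial summation on the asymptotic count $\sim c\, N^{1/2}$ of squarefull integers up to $N$, yields
\[\sum_{p > y} \frac{x}{p\, f_k(p-1)} \ll \frac{x}{y^{1/2}} = x^{1 - 1/(4k-1)},\]
matching the target bound exactly.

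For $P(n) \leq y$, the integer $n$ is $y$-smooth and squarefree, so it has a divisor $d$ in any window $(A/y, A]$ with $A \leq n$, for a parameter $A$ to be optimized. Since $\varphi(d) \mid \varphi(n) \mid (n-1)^k$, one has $f_k(\varphi(d)) \mid n-1$, placing $m = n/d$ in a single residue class modulo $f_k(\varphi(d))$. Using the bound $f_k(\varphi(d)) \geq \varphi(d)^{1/k} \gg (d/\log\log d)^{1/k}$ and summing over all $d$ in the window gives a total of order
\[A + \frac{x\, y^{1/k}}{A^{1/k}}(\log\log x)^{1/k},\]
optimized at $A = (x y^{1/k})^{k/(k+1)}$. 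This produces a bound with exponent $(4k^2 - k + 2)/((k+1)(4k-1))$, and a direct comparison against the target exponent $(4k-2)(k+1)/((k+1)(4k-1))$ reduces to $3k-4 > 0$, valid for $k \geq 2$. Hence this case contributes a strictly smaller power than $x^{1-1/(4k-1)}$.

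The main technical hurdle is securing the squarefull identity in Case~1, which rests on the elementary inequality $a + \lceil a/k \rceil \geq 2$ for $a \geq 1$; once this is in hand, both case bounds fall out of routine optimizations, and combining them yields $L_k(x) \ll_k x^{1 - 1/(4k-1)}$.
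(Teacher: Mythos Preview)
Your proof is correct and follows essentially the same strategy as the paper: split according to $P(n)$, handle large $P(n)$ via the squarefull-tail trick, and for small $P(n)$ locate a divisor $d$ in a suitable window and count residue classes for $n/d$ modulo (a $k$-th root of) $\varphi(d)$. The differences are only in the parameter choices and packaging: you set the large-prime threshold at $y=x^{2/(4k-1)}$ so that the large-prime case is the bottleneck and the smooth case is strictly smaller, whereas the paper sets it at $x^{2k/(4k-1)}$ so that the smooth case carries the exponent $1-\tfrac{1}{4k-1}$; and you phrase the residue counting through the explicit function $f_k$ (noting $(p-1)f_k(p-1)$ is squarefull and $n\mapsto n f_k(n)$ is injective), while the paper uses the equivalent statement that $r^k\equiv 0\pmod N$ has at most $N^{(k-1)/k}$ solutions together with the original $(p-1)\,\rad{p-1}$ squarefull identity. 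One small omission: your window argument in Case~2 requires $A\le n$, so you should note separately that the integers $n<A$ contribute at most $A$, which is already dominated by your optimized bound.
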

\begin{proof} We consider three cases, based on the size of the largest prime divisor.  We consider first those $n$, $x^{1-\frac{1}{4k-1}}< n \leq x$, which have $P(n)<x^{\frac{k}{4k-1}}$.  Any such $n$ will have a divisor $d$ in the range $(x^\frac{k}{4k-1},x^\frac{2k}{4k-1})$. Write $n = md$, so $m \leq /d$ and since $\varphi(md)|(md-1)^k$, we see that $(md-1)^{k} \equiv 0 \pmod{\varphi(d)}$.  
  
Now, for any positive integer $N$, the number of residue classes $r \pmod{N}$ with $r^k \equiv 0 \pmod{N}$ is at most $N^\frac{k-1}{k}$.  Thus, for any fixed $d$, using the fact that $(d,\varphi(d))=1$, we see that $m$ must be in one of at most $\varphi(d)^{\frac{k-1}{k}}$ residue classes mod $\varphi(d)$, giving us at most \[\varphi(d)^{\frac{k-1}{k}}\left\lceil\frac{x}{d\varphi(d)}\right\rceil \leq \varphi(d)^{\frac{k-1}{k}}\left(1+\frac{x}{d\varphi(d)}\right)\]
choices for $m$.
  
  Summing over all $d$ in the range $I = (x^\frac{k}{4k-1},x^\frac{2k}{4k-1})$, we get  
  
\begin{align*}
  \sum_{d \in I} \varphi(d)^{\frac{k-1}{k}}\left(1+\frac{x}{d\varphi(d)}\right) &\leq \sum_{d \in I} d^\frac{k-1}{k} + \frac{x}{d^{1+\frac{1}{k}}}\left(\frac{d}{\varphi(d)}\right)^{\frac{1}{k}} \\
& \leq \sum_{d \in I} d^\frac{k-1}{k} + \sum_{d \in I} \frac{x}{d^{1+\frac{1}{k}}}\left(\frac{d}{\varphi(d)}\right). \\
\end{align*}
The first sum is $\ll x^{1-\frac{1}{4k-1}}$. 
Now, using partial summation on the second sum and the fact that $\sum_{t \leq x} \frac{t}{\varphi(t)} = O(x)$, we get
\begin{align*}
\sum_{d \in I} \frac{x}{d^{1+\frac{1}{k}}}\left(\frac{d}{\varphi(d)}\right)  &\ll \frac{x}{x^{(\frac{2k}{4k-1})({1+\frac{1}{k}})}}\sum_{d\leq  x^\frac{2k}{4k-1}}\frac{d}{\varphi(d)}+  x\int_{x^\frac{k}{4k-1}}^{x^\frac{2k}{4k-1}}\frac{1}{t^{2+\frac{1}{k}}}\sum_{i<t}\frac{t}{\varphi(t)}dt\\
&\ll \frac{x}{x^{(\frac{2k}{4k-1})({1+\frac{1}{k}})}}\left(x^\frac{2k}{4k-1}\right) +  x\int_{x^\frac{k}{4k-1}}^{x^\frac{2k}{4k-1}}\frac{1}{t^{1+\frac{1}{k}}}dt\\
&\ll_k x^{1-\frac{2}{4k-1}} + \frac{x}{x^{(\frac{k}{4k-1})({\frac{1}{k}})}} \ll x^{1-\frac{1}{4k-1}}.
\end{align*}
  
In the second case we consider those $n$ with $x^{\frac{k}{4k-1}}<P(n) \leq x^{\frac{2k}{4k-1}}$. In this case $n$ again has a divisor in the range $(x^\frac{k}{4k-1},x^\frac{2k}{4k-1})$, namely $p$, and the above argument applies verbatim.
  
Finally we've reduced to the case that $P(n)>x^{\frac{2k}{4k-1}}$, and the argument used for large primes in our main theorem gives us that the number of $n$ with $\kappa(n)|n-1$ and $P(n)> x^{\frac{2k}{4k-1}}$ is at most $x^{1-\frac{k}{4k-1}}$, hence for those $n$ in $\mathbb{L}_k(x)$ as well, and our result follows.
\end{proof} 
  We note that it may be possible to improve upon this bound by using  techniques developed in more recent papers to obtain better bounds on the Lehmer numbers.
  
\section{Computations and Conjectures}
 \label{sec:computations}
 Table \ref{table} shows the values of $K(x)$ we computed for increasing powers of 10, compared with values of $C(x)$, computed by Richard Pinch \cite{Pinch}.  Our computations were done using trial divison, in which a candidate number, $n$, was rejected as soon as soon as it was found to be nonsquarefree, or to have a prime divisor $p$, which failed to satisfy $\rad{p-1}|n-1$.
 \begin{table}[ht]
 \caption{Values of $C(x)$ and $K(x)$ to $10^{11}$.}
 \label{table}
    \begin{tabular}{|l|l|l|}
        \hline
        $n$ & $C(10^n)$ & $K(10^n)$ \\ \hline 
        2   & 0         & 4         \\ 
        3   & 1         & 19        \\ 
        4   & 7         & 103       \\ 
        5   & 16        & 422       \\ 
        6   & 43        & 1559      \\ 
        7   & 105       & 5645      \\ 
        8   & 255       & 19329     \\ 
        9   & 646       & 64040     \\ 
        10  & 1547      & 205355    \\ 
        11  & 3605      & 631949         \\
        \hline
    \end{tabular}
    \end{table}
    
Despite the similar asymptotic bounds that we have for $C(x)$ and $K(x)$, it is clear that $K(x)$ is growing substantially faster, which leads to the conjecture:
\begin{conjecture} $\lim_{x \to \infty} K(x)/C(x) = \infty$. 
\end{conjecture}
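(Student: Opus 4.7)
The plan is to adapt the Alford-Granville-Pomerance construction of Carmichael numbers to the weaker $\mathbb{K}$-condition, and compare the two constructions quantitatively. I would first fix a highly divisible modulus $L$ (for instance the primorial $\prod_{p \leq y}p$ for a suitable $y = y(x)$) and consider the set $\mathcal{P}_L = \{p \leq x^{1/k}: \rad{p-1} \mid L\}$, i.e.\ the primes $p$ for which $p-1$ is $y$-smooth. Because $\rad{p-1}\mid L$ is strictly weaker than $(p-1)\mid L$, the set $\mathcal{P}_L$ is populated by shifted smooth primes rather than primes in an arithmetic progression, and its size should be bounded below using the extensive literature on smooth values of $p-1$ (in the spirit of Baker-Harman). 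For a suitably balanced $y$ one expects $|\mathcal{P}_L|$ to be substantially larger than the corresponding AGP input.

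The next step is the standard subset-product argument: any product $n = \prod_{p \in S} p$ with $S \subseteq \mathcal{P}_L$ and $n \equiv 1 \pmod L$ automatically lies in $\mathbb{K}$, since $\rad{\varphi(n)}$ then divides $L$, which divides $n-1$. A character-sum pigeonhole as in \cite{Infinite} converts the size of $\mathcal{P}_L$ into a lower bound on $K(x)$. Running the same construction for Carmichael numbers forces one to replace $\mathcal{P}_L$ by the thinner set $\{p: (p-1)\mid L\}$, and tracking the ratio of these two prime counts is intended to translate into a divergent lower bound for $K(x)/C(x)$. One would also want to verify that the $\mathbb{K}$-numbers produced this way are generically \emph{not} Carmichael (which follows, for typical $p \in \mathcal{P}_L$, because $p-1$ contains high prime powers not dividing $n-1$).

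The main obstacle is that Pomerance's upper bound on $C(x)$ has the same shape $x/L(x)^{1+o(1)}$ as the upper bound on $K(x)$ proved in Theorem \ref{Main}, so separating the two asymptotically requires either sharpening Pomerance's argument by exploiting the extra divisibility in the Carmichael case, or producing a lower bound for $K(x)$ that provably beats the best known lower bound for $C(x)$ by a divergent factor. The numerical evidence in Table \ref{table} strongly supports the qualitative picture, but making either direction fully rigorous appears to rely on open questions about the distribution of smooth shifted primes and on a careful analysis of the combinatorial overcount in the AGP construction.
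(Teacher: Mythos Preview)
The paper does not prove this statement: it is stated there as an open \emph{conjecture}, and the authors explicitly remark that they are unable to establish even the weaker assertion that $K(x)-C(x)\to\infty$. So there is no ``paper's own proof'' to compare against; any proposal here is an attack on an open problem.

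Your outline is a natural first thought, but it contains a genuine logical gap in the last step. Suppose your AGP-with-smooth-shifted-primes construction succeeds and yields $K(x)\ge g(x)$ for some explicit $g$. To conclude $K(x)/C(x)\to\infty$ you would need to pair this with an \emph{upper} bound $C(x)\le h(x)$ satisfying $g(x)/h(x)\to\infty$. What you actually propose is to compare $g(x)$ with the lower bound for $C(x)$ coming from the (thinner) AGP input set $\{p:(p-1)\mid L\}$; but a divergent ratio of two \emph{lower} bounds says nothing about $K(x)/C(x)$, since $C(x)$ could vastly exceed its own lower bound. The only available upper bound for $C(x)$ is Pomerance's $x/L(x)^{1+o(1)}$, and no subset-product construction---however generously the input prime set is enlarged---is known to produce anywhere near that many outputs (the record for $C(x)$ is $x^{0.3322\ldots}$, and even optimistic heuristics for smooth shifted primes do not push the $\mathbb{K}$-construction past $x^{1-\varepsilon}$). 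So the two sides of the inequality you would need are separated by a huge gap that your plan does not bridge.

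You do acknowledge an obstacle in your final paragraph, but the phrasing ``beats the best known lower bound for $C(x)$ by a divergent factor'' repeats the same error: beating a lower bound for $C(x)$ is irrelevant. The honest statement of the difficulty is that one must either (i) prove $C(x)\le x/L(x)^{1+\delta(x)}$ with some explicit $\delta(x)$ bounded away from $0$ while simultaneously proving $K(x)\ge x/L(x)^{1+\delta'(x)}$ with $\delta'(x)<\delta(x)$, or (ii) find a completely different mechanism that compares $K$ and $C$ directly without going through separate upper and lower bounds. Neither route is currently within reach, which is exactly why the paper leaves this as a conjecture.
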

    
At the moment, however, we are unable to prove even the much weaker conjecture:  
\begin{conjecture} $\lim_{x \to \infty} K(x) - C(x) = \infty$.
\end{conjecture}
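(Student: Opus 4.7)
Since every Carmichael number trivially lies in $\mathbb{K}$, we have $K(x)\geq C(x)$ for all $x$, and it suffices to exhibit an infinite subset of $\mathbb{K}\setminus\{\text{Carmichael numbers}\}$. The cleanest candidates are two-prime products: by the calculation preceding the last theorem, a semiprime $pq$ lies in $\mathbb{K}$ precisely when $\rad{p-1}=\rad{q-1}$, and no Carmichael number has exactly two prime factors. Thus proving $K_2(x)\to\infty$ would already suffice, and the plan is to produce infinitely many pairs of distinct primes with matching shifted radicals.

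My first attempt would fix a convenient small squarefree $R$ (for instance $R=6$) and look at primes $p$ with $\rad{p-1}=R$, that is, $p=2^a3^b+1$ with $a,b\geq 1$. Any two such primes yield an element of $\mathbb{K}$ counted by $K_2$; writing $\pi_R(y)$ for the number of such primes up to $y$, one has $K_2(y^2)\geq \binom{\pi_R(y)}{2}$, so the conjecture follows as long as $\pi_R(y)$ is unbounded for some fixed $R$. A more flexible pigeonhole variant would instead try to show that a sizable set of primes $p\leq y$ all satisfy $\rad{p-1}\leq f(y)$ for some $f(y)=y^{o(1)}$, after which two of them must share a radical and give an element counted by $K_2$.

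The main obstacle is that both steps are essentially unknown prime-producing statements. Whether there are infinitely many primes $p$ with $p-1$ of the form $2^a3^b$, or more generally with $p-1$ extremely smooth, is a long-standing open question of Fermat-prime and Sophie Germain flavor. The pigeonhole route fails for the opposite reason: results in the spirit of Erd\H{o}s and Pomerance show that for almost all primes $p\leq y$ the radical $\rad{p-1}$ is nearly as large as $p$ itself, so the buckets of small radical are far too thin to guarantee a collision. Conditional on the Hardy--Littlewood prime $k$-tuples conjecture, the difficulty evaporates, since the family $(p,q)=(m+1,2m+1)$ already noted in the paper gives $K_2(x)\gg x^{1/2}/(\log x)^2$ and simultaneously proves the stronger Conjecture 1. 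Bounded-gap techniques (Maynard--Tao) do not seem to help either, because admissible-tuple arguments appear insensitive to the rigid radical-matching constraint. I therefore expect no unconditional proof without genuinely new input on primes in smooth-shift sequences, which is consistent with the author's remark that even this weakened form of the conjecture remains open.
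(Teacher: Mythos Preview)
Your proposal is not a proof, and you correctly recognize this: the statement is labeled \emph{Conjecture} in the paper precisely because the author has no proof either. The paper states explicitly that it is ``unable to prove even the much weaker conjecture'' that $K(x)-C(x)\to\infty$, so there is nothing on the paper's side to compare against. Your discussion of why the natural attacks fail---in particular that $K_2(x)\to\infty$ would suffice, that this reduces to finding infinitely many prime pairs $p,q$ with $\rad{p-1}=\rad{q-1}$, and that all known routes to this run into open problems about primes with smooth shifts---is accurate and matches the paper's own conditional remark that the Hardy--Littlewood conjecture would give $K_2(x)\gg x^{1/2}/(\log x)^2$. In short, you and the author are in the same position: the conjecture remains open, and your write-up is an honest account of the obstacles rather than a proof.
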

\section*{Acknowledgments}
I would like to thank my advisor, Carl Pomerance, for suggesting the problem and for his invaluable guidance and encouragement throughout the development of this paper.
\bibliographystyle{amsplain} 
\bibliography{radbib}
\end{document}